\newcommand\shade{0.9}
\definecolor{lightgrey}{rgb}{\shade,\shade,\shade}
\newtheorem{theorem}{Theorem}
\newtheorem{corollary}{Corollary}
\newcommand{\cref}[1]{Corollary~\ref{cor:#1}}
\newcommand{\T}{{\textsf{T}}}
\newcommand\set[1]{\{ #1\}}
\newcommand\flr[2]{\left\lfloor \frac{#1}{#2} \right\rfloor}
\newcommand{\Tn}[1]{\mathbf T_{#1}}
\newcommand{\vd}{\mathbf V}
\newcommand{\hd}{\mathbf H}
\newcommand{\sd}{\mathbf S}
\newcommand{\vh}{\mathbf{V\hspace{-5pt}H}}
\newcommand{\bigO}{O}
\newcommand\vdlista[3]{$\mathtt{((\set{#1},\set{#2}),\set{#3})}$}
\newcommand\vdlistb[3]{$\mathtt{(\set{#1},(\set{#2},\set{#3}))}$}
\newcommand\vdlistc[2]{$\mathtt{(\set{#1},\set{#2})}$}
\begin{document}
\pagestyle{plain}

\lstset{language=Python,%
  basicstyle=\small\ttfamily,%
  keywordstyle=\color{blue}\bfseries,%
  commentstyle=\color{gray},%
  stringstyle=\color{purple}\ttfamily,%
  showstringspaces=false,%
}

\title{Generating Tatami Coverings Efficiently}%

\author{Alejandro Erickson}%
\email{alejandro.erickson@gmail.com}%
\address{School of Engineering and Computing Sciences, Durham
  University, DH1 3LE, United Kingdom}

 \author{Frank Ruskey}

 \address{Department of Computer Science, University of Victoria, V8W
   3P6, Canada}%

\begin{abstract}
  We present two algorithms to list certain classes of monomino-domino
  coverings which conform to the \emph{tatami} restriction; no four
  tiles meet.  Our methods exploit structural features of tatami
  coverings in order to create the lists in $\bigO(1)$ time per
  covering.  This is faster than known methods for generating certain
  classes of matchings in bipartite graphs.

  We discuss tatami coverings of $n\times n$ grids with $n$ monominoes
  and $v$ vertical dominoes, as well as tatami coverings of a two-way
  infinitely-wide strip of constant height, subject to the constraint that
  they have a finite number
  of non-trivial structural ``features''.

  These two classes are representative of two differing structural
  characterisations of tatami coverings which may be adapted to count
  other classes of tatami coverings or locally restricted matchings,
  such as tatami coverings of rectangles.
 \end{abstract}

 \maketitle

 \section{Introduction}

 The counting of domino coverings, together with its extension to
 counting perfect matchings in (planar) graphs, is a classic area of
 enumerative combinatorics and theoretical computer science
 (e.g.,~\cite{Stanley1985,Jerrum1990,TemperleyFisher1961,Kasteleyn1961,Kasteleyn1967,Valiant1979}).
 Less attention has been paid, however, to problems where the local
 interactions of the dominoes are restricted in some fashion
 (e.g.,~\cite{ChurchleyHuangZhu2011}).  Perhaps the most natural such
 restriction is the ``tatami'' condition, defined below.  The tatami
 condition is quite restrictive: for example, the $10\times 13$ grid
 cannot be covered with dominoes and also satisfy the tatami
 condition.  In this paper we discuss how to efficiently list certain
 types of tatami coverings.

Tatami mats are a traditional Japanese floor covering whose dimensions
are approximately $1\textup{m} \times 1\textup{m}$ or
$1\textup{m}\times
2\textup{m}$. 
In certain arrangements, no four tatami mats may meet.  A \emph{tatami
  covering} is a non-overlapping arrangement of $1\times 1$
monominoes, $1\times 2$ horizontal dominoes, and $2\times 1$ vertical
dominoes, in which no four tiles meet.  This local restriction forces
a global structure which is characterised in
\cite{EricksonRuskeySchurch2011} and \cite{EricksonSchurch2012}.

Takeaki Uno (in \cite{Uno1997}) generates various lists of matchings
for bipartite graphs, $G=(V,E)$, with time complexities of
$\bigO(|V|)$ per matching.  Our problems, which pertain to listing
locally restricted matchings on grid-graphs, are $\bigO(1)$ per
matching.  Specifically, we use the tatami structure to describe two
classes of tatami coverings which can be generated exhaustively in
constant amortised time (CAT), and discuss some general approaches to
generating more complex classes of tatami coverings.

The first of these classes are the tatami coverings of the $n\times n$ grid
with exactly $n$ monominoes, in a certain orientation, and $v$
vertical dominoes, denoted $\vd(n,k)$.  This is an extremal
configuration of tatami covering because $n$ is the maximum possible
number of monominoes in an $n\times n$ tatami covering, and it can be
described with a proper subset of the general tatami structure.

Let $\sd(n,k)$ be the subsets of $\set{1,2,\ldots, n}$ whose elements
sum to $k$.  Lemma 4.1 of \cite{EricksonRuskey2013a} describes a
bijection between the coverings in $\vd(n,k)$ and a union of sets of the
form $\sd(a,b)\times \sd(c,d)$ which is given in expressions~(\ref{eq:vh}--\ref{eq:vhx}).

Our technique for finding this bijection employs an operation which
preserves the tatami condition, called the \emph{diagonal flip},
defined in \cite{EricksonSchurch2012}.  The added observation that a
diagonal flip changes the orientation of the dominoes in it, enables us count
the coverings classified by the number of dominoes of each orientation.

The crux of the argument uses a partition of the $n\times n$ coverings
with $n$ monominoes which reveals diagonal flips each with
$1,2,\ldots, k$ dominoes, respectively, that can be flipped
independently.  We use this to express parts of a given tatami
covering as the number of subsets of $\{1,2,\ldots,n\}$ whose elements
sum to $k$.  An algorithm from \cite{BaronaigienRuskey1993} generates
the $k$-sum subsets of the $n$ set in constant amortised time, and we
use it to generate $\vd(n,k)$.

The second class comprises tatami coverings of a two-way infinitely
wide strip of constant height which have a finite number of
non-trivial structural features (first introduced in \cite{EricksonRuskeySchurch2011}).
This is a step toward generating all tatami coverings of rectangular
grids by obviating the geometrical difficulties of packing the
structural features into the rectangle.  The algorithm itself is quite
simple, arising from a system of homogeneous equations with positive
coefficients, however, the equations themselves are interesting
because they exert the power of the tatami structure.  We contrast
this with Theorem 3 of \cite{EricksonRuskeySchurch2011}, which gives
an algorithm for building a system of homogeneous linear recursive
equations to enumerate all tatami coverings of fixed-height
rectangles. Our equations, on the other hand, include the height of
the strip as a parameter.  We expect that our method for enumerating
strip coverings of fixed-height will serve to improve methods for
enumerating rectangular tatami coverings of fixed-height.







\section{Tatami coverings of square regions}
\label{sec:combalg}
\newcommand{\cc}{\texttt{C4}}%
\newcommand{\modcc}{\texttt{modC}}%
\newcommand{\Sset}{\mathbf S}

Let $\Tn{n}$ be the $n\times n$ coverings which are distinct under
rotation.  By Corollary 2 in \cite{EricksonRuskeySchurch2011}, we may
assume that these have monominoes in their top corners; all others being
obtained by rotation. Our goal is to
generate the coverings in $\Tn{n}$ which have exactly $k$ vertical
dominoes.


Let $\vd(n,k)$ and $\hd(n,k)$ be the coverings in $\Tn{n}$ with
exactly $k$ vertical and horizontal dominoes, respectively, and let
$\sd(n,k)$ be the subsets of $\set{1,2,\ldots, n}$ whose elements sum
to $k$.  We build our exhaustive generation algorithm from the
following bijection given in Lemma 4.1 of \cite{Erickson2013a}:

If $n$ is even, then $|\vd(n,k)|$ is equal to $|\vh(n,k)|$, which is
defined as the union of the sets
  \begin{align}
    \label{eq:vh}
    \bigcup_{i=1}^{\flr{n-1}{2}} \bigcup_{\substack{k_1+k_2 =\\k
        -(n-i-1)}}
    &\left(\left(\set{n-i-1}\times \sd(n-i-2,k_1)\right)\times \sd(i-1,k_2)\right.\\
    \notag    & \left. \cup \,\,\sd(i-1,k_2)\times \left(\set{n-i-1}\times \sd(n-i-2,k_1)\right)\right), \\
    \label{eq:vhx}
\text{ and } \bigcup_{k_1+k_2 = k}&\sd\left(\flr{n-2}{2},k_1\right)\times
    \sd\left(\flr{n-2}{2},k_2\right).
  \end{align}
When $n$ is odd, $|\vh(n,k)|$ is equal to $|\hd(n,k)|$.

Each integer in an element of $\vh(n,k)$ refers to a part of the
corresponding tatami covering, called a \emph{flipped diagonal}.  A
precise definition of this is in \cite{EricksonSchurch2012}, but referring
to the first two coverings in Figure~\ref{fig:diagonal} it should be clear
what a diagonal is, and what flipping it means.

Now consider the third diagram in this figure; it has four flipped
diagonals.  Furthermore, the largest of those flipped diagonals
contains 12 vertical dominoes, there are two others with the same
orientation, containing 3 and 8 vertical dominoes, and there is one
with the perpendicular orientation containing 1 vertical domino.  In
essence, this is what (\ref{eq:vh}) is counting; the outermost union
is over the number of dominoes, $n-i-1$, in the largest flipped
diagonal, and the inner union is over the other smaller flipped
diagonals, being careful that their respective sizes adds up to the
right total number of vertical dominoes $k$.

What, then, is the purpose of (\ref{eq:vhx})?  Note that a diagonal can flip up or
down.  If the largest flipped diagonal is the smaller of the two (up or down), then
a different classification arises, because there is no possibility of flipped
diagonals ``running into'' each other.  This leads to the slightly simpler expression
found in (\ref{eq:vhx}).  

The reader is encouraged to consult Figure \ref{fig:n8k7}.  It contains a complete
listing of $\vd(8,7)$ together with the corresponding sets.  The eight pairs from
(\ref{eq:vh}) are listed first, followed by eight elements from (\ref{eq:vhx}).
Note how the numbers match the number of dominoes in the various flipped diagonals.

\newcommand\factor{0.27\textwidth}
\newcommand\hspacefactor{\hspace{0.02\textwidth}}
\begin{figure}[h]
  \centering
\includegraphics[width=\factor]{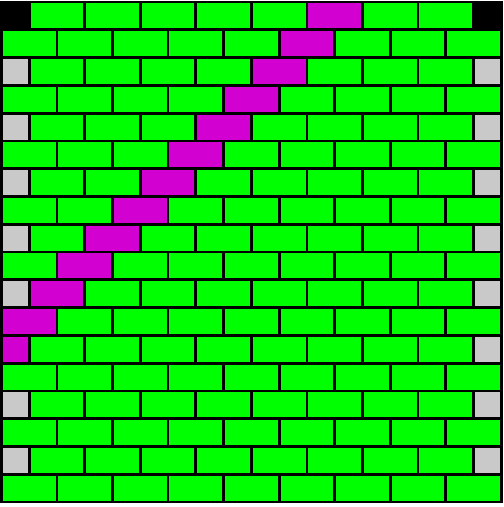} \hspacefactor %
\includegraphics[width=\factor]{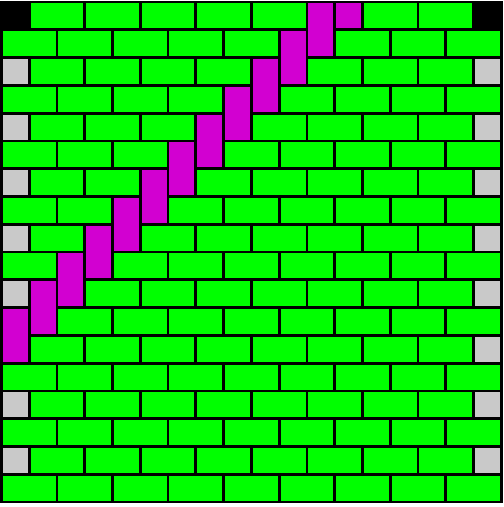} \hspacefactor %
\includegraphics[width=\factor]{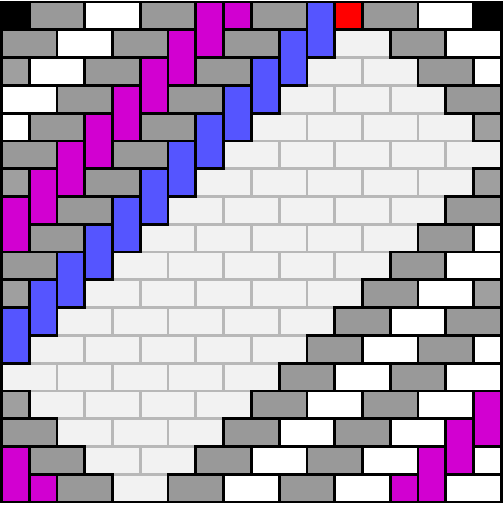} %
\caption{From left-to-right: an unflipped diagonal is shown in
  magenta, the diagonal is flipped, and the tatami covering
  corresponding to \vdlista{12}{3,8}{1}. This is an element of
  $(\set{n-5-1}\times \sd(n-5-2,11))\times \sd(5-1,1)$, where
  $n = 18$.}
\label{fig:diagonal}
\end{figure}


Our expression (\ref{eq:vh}--\ref{eq:vhx}) for $\vh(n,k)$ can be transformed into a CAT algorithm, provided that
we have a CAT algorithm for $\sd(n,k)$.  There is such a CAT algorithm, called
\cc~in~\cite{BaronaigienRuskey1993}.  One subtlety is that
invoking \cc$(n,k)$ requires $\Omega(n)$ preprocessing steps;
however, there are extreme values of $k$ for which $|\vh(n,k)| = o(n)$.
Since our goal is an algorithm that produces the (exponentially many) elements
of $\vh(n,k)$ in constant amortised time, and which uses only $O(n)$ time in
preprocessing, we will have to modify how the initialization is done.

The underlying data structure that is used by \cc~is an array
$\mathbf{c} = c[0],c[1],\ldots,c[n]$.  In order to represent a set
$\mathbf{a} = \{a_1 < a_2 < \cdots a_k \} \in \sd(n,k)$ we set $c[0] =
a_1$ and $c[a_i] = a_{i+1}$.  With these rules, the array $\mathbf c$ is like
a linked list of the elements of $\mathbf{a}$.  However, the unused
elements of $\mathbf c$ are also important, and here we set $c[i] = i+1$ for
all $i \not\in \mathbf{a}$, so that the initial array corresponds to
the correct values in the extreme cases.

A top level call to \cc$(n,k)$ takes the array $\mathbf{c_0} = [n+1,2,3,\ldots, n+1]$
as input (corresponding to the empty set), an array which takes $n+1$ steps to initialise.
However, when \cc$(a,i)$ concludes its computation, we also have that $\mathbf{c} = \mathbf{c}_0$.
Thus we can use most of $\mathbf{c}_0$ in initializing nearby successive calls to \cc.
In particular, we note that in (\ref{eq:vh}) successive values of $n$ vary by $\pm 1$,
which requires only a constant amount of change to the initialization.


Figure~\ref{fig:n8k7} shows the output of $\vd(8,7)$ of the algorithm
we have described.

\begin{figure}[h]
  \centering
\includegraphics[width=0.8\textwidth]{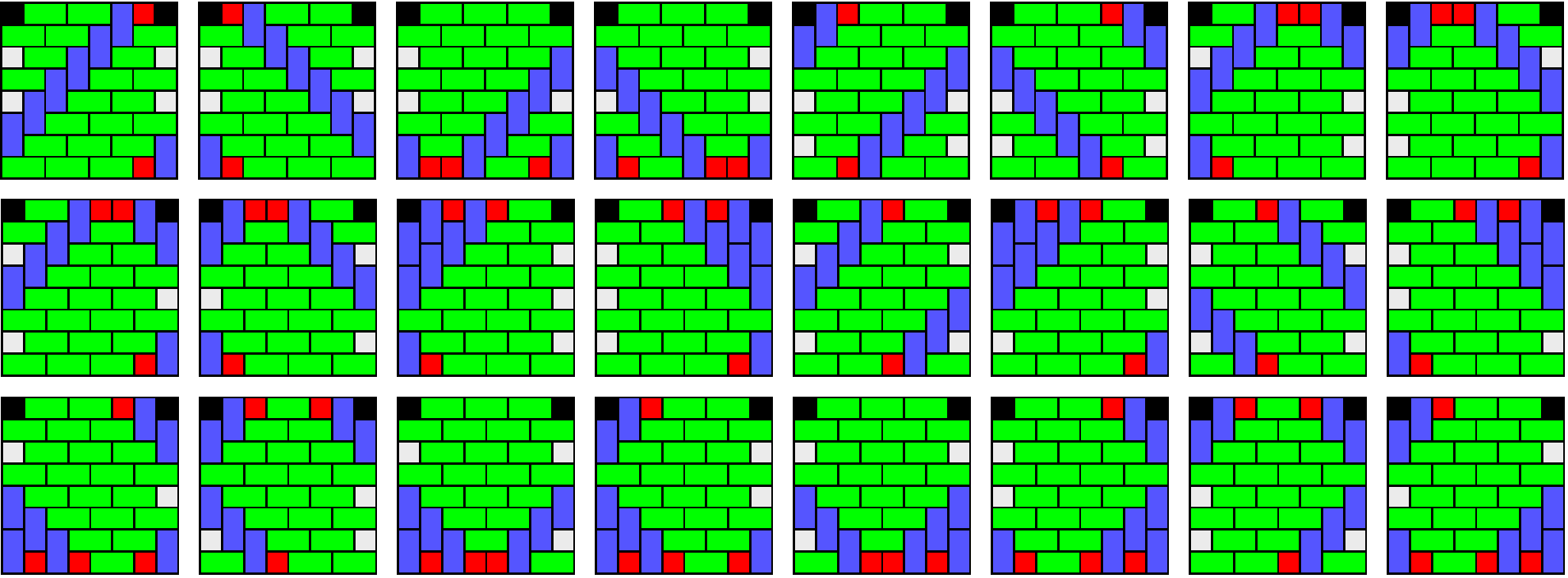}
\begin{sideways}
{\small
  \begin{tabular}{l}
\vdlista{6}{1}{}\\
\vdlistb{}{6}{1}\\
\vdlista{5}{1}{1}\\
\vdlistb{1}{5}{1}\\
\vdlista{5}{2}{}\\
\vdlistb{}{5}{2}\\
\vdlista{4}{}{2,1}\\
\vdlistb{2,1}{4}{}\\
\vdlista{4}{1}{2}\\
\vdlistb{2}{4}{1}\\
\vdlista{4}{2}{1}\\
\vdlistb{1}{4}{2}\\
\vdlista{4}{3}{}\\
\vdlista{4}{1,2}{}\\
\vdlistb{}{4}{3}\\
\vdlista{}{4}{1,2}\\
\vdlistc{1}{1,2,3}\\
\vdlistc{2}{2,3}\\
\vdlistc{3}{1,3}\\
\vdlistc{1,2}{1,3}\\
\vdlistc{1,3}{3}\\
\vdlistc{1,3}{1,2}\\
\vdlistc{2,3}{2}\\
\vdlistc{1,2,3}{1}
\end{tabular}
}
\end{sideways}
\caption{The coverings of $\vd(8,7)$ and their representations.}
\label{fig:n8k7}
\end{figure}

\begin{theorem}
  \label{thm:cattnk}
  There is a CAT algorithm which exhaustively generates all elements
  of $\vd(n,k)$.
\end{theorem}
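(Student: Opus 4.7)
The plan is to convert the set-theoretic decomposition (\ref{eq:vh})--(\ref{eq:vhx}) into nested loops whose innermost kernel is the CAT subset generator \cc~of \cite{BaronaigienRuskey1993}, and to pipe each emitted tuple through the $\vh\to\vd$ bijection (illustrated in Figures~\ref{fig:diagonal} and~\ref{fig:n8k7}) to produce a covering. Each leaf of the decomposition is a Cartesian product of at most two sets $\sd(\cdot,\cdot)$, possibly with a fixed singleton prefix or suffix; such a product is enumerated by running \cc~on the second factor as an outer loop and, for each set it returns, running \cc~on the first factor as an inner loop. Since both generators are CAT and each outer state is refined by exponentially many inner outputs, the product is itself enumerated in constant amortised time per element. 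Because the bijection modifies only the tiles belonging to the flipped diagonals indexed by the coordinates that actually change in the current tuple, it can be applied incrementally in $O(1)$ time per emitted covering, so the full pipeline preserves the CAT property.

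The main obstacle is the $\Omega(a)$ preprocessing cost of \cc$(a,b)$, as flagged in the excerpt. A naive implementation would pay this cost at each of the $\Theta(n)$ leaves of (\ref{eq:vh}), giving $\Omega(n^2)$ setup time, which dominates the output whenever $|\vd(n,k)|=o(n^2)$ and destroys the amortised guarantee. The fix exploits two properties of \cc~already noted in the excerpt: each completed call returns the backing array $\mathbf{c}$ to the canonical state $\mathbf{c}_0=[n+1,2,3,\ldots,n+1]$, and the array-length parameters demanded by consecutive calls in (\ref{eq:vh}) differ by only $\pm 1$. I would therefore allocate one shared array of length $n+1$, pay $O(n)$ once at start-up to put it into the state $\mathbf{c}_0$, and rewrite only a constant number of boundary cells between successive calls; the transition from (\ref{eq:vh}) to (\ref{eq:vhx}) is another constant-size adjustment, and the split $k_1+k_2=k-(n-i-1)$ contributes only $O(1)$ bookkeeping per choice of $(k_1,k_2)$.

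To make the amortisation airtight, I would prune every leaf for which $\sd(a,b)=\emptyset$ (i.e.\ $b<0$ or $b>a(a+1)/2$) before calling \cc, so that every surviving invocation emits at least one element and can absorb its own $O(1)$ setup cost. Adding the one-off $O(n)$ for $\mathbf{c}_0$, the total running time becomes $O(n+|\vd(n,k)|)$; for the finitely many degenerate $(n,k)$ with $|\vd(n,k)|<n$ I would emit the outputs directly from the definition, and the odd-$n$ case is handled by driving the same machinery through the parallel bijection $|\vh(n,k)|=|\hd(n,k)|$ noted in the excerpt and reflecting the resulting coverings. In all remaining cases the amortised cost per covering is $O(1)$, which is the CAT bound asserted by the theorem.
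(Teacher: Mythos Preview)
Your proposal follows the paper's own argument essentially line for line: the discussion preceding the theorem \emph{is} its proof, and it relies on exactly the decomposition (\ref{eq:vh})--(\ref{eq:vhx}), the \cc~subset generator of \cite{BaronaigienRuskey1993}, and the observation that $\mathbf{c}$ returns to $\mathbf{c}_0$ after each call so that the $\pm 1$ changes in array length between consecutive invocations cost only $O(1)$ to absorb. You are more explicit than the paper about pruning empty $\sd(a,b)$ factors, handling degenerate $(n,k)$, and the odd-$n$ case, but the approach is the same.
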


\section{Finite tatami coverings of the infinite strip}
\label{sec:strip-covering}

The flipped diagonals of Section~\ref{sec:combalg} are a special case
of \emph{\T-diagrams}, given in
\cite{EricksonRuskeySchurch2011,Erickson2013a}, which characterise
monomino-domino tatami coverings of rectangles.  A \T-diagram is a
schematic of a tatami covering, which gives a set of structural
\emph{features} that determine the placement of the tiles.  A
\emph{strip} of height $r$ is a two-way infinitely wide integer grid
of constant height $r$.  The \T-diagrams, defined for rectangular
grids, also apply to the strip with the difference that there are no
vertical boundaries.  A \emph{finite monomino-domino tatami strip
  covering} is a monomino-domino tatami covering of the strip with a
finite number of \T-diagram features.  We refer to these as strip
coverings, as we do not consider any other type.

Two \T-diagram features are \emph{isomorphic}\index{isomorphic strip
  covering} if the respective sets of line segments they comprise are
horizontal translations of each other.  Two strip coverings are
isomorphic if their respective features, listed from left to right,
are isomorphic.  See Figure~\ref{fig:strip} for an overview by
example, or \cite{Erickson2013a} for a detailed description.

\begin{figure}[h]
  \centering
\includegraphics[width=0.8\textwidth]{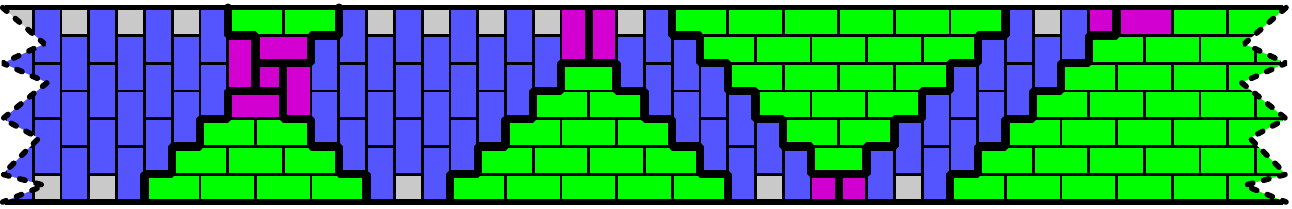}

\includegraphics[width=0.8\textwidth]{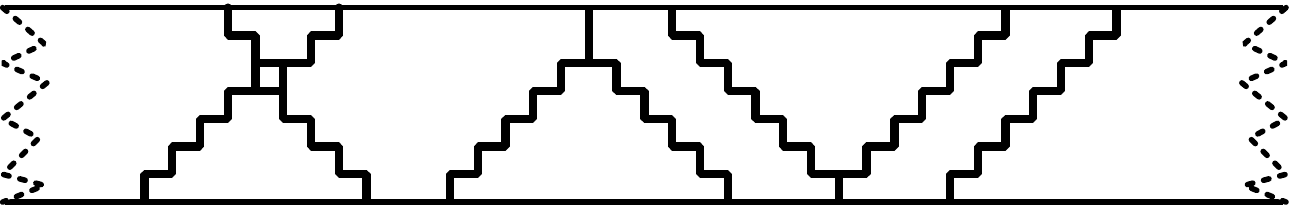}
  \caption{A tatami covering of the strip and its \T-diagram.  The
    features, from left to right, are a vortex, a bidimer, a vee, and a loner.}
  \label{fig:strip}
\end{figure}

Strip coverings, up to isomorphism, encapsulate some of the
combinatorial properties of rectangular tatami coverings without so
many of the geometric details that arise when packing feature diagrams
into a rectangle.  On the other hand, the \T-diagram of a strip
covering can be bounded by two vertical lines, thereby converting it
to a rectangular \T-diagram.  We show that there is a CAT algorithm to
generate the non-isomorphic height $r$ strip coverings with
$k$-features.

\begin{theorem}
  \label{thm:strip}
  If $R(r,n)$ is the number of non-isomorphic height $r$ strip
  coverings with exactly $n$ features, then it satisfies the system of
  homogeneous linear recurrence relations,
  \begin{align*}
    V_r(n) = & 4(r-1)V_r(n-1) + 2H_r(n-1),\\
    &\text{ where } V_r(0) = 1, V_r(1) = 4r-2;\\
    H_r(n) = & 2V_r(n-1), \text{ where } H_r(0) = 1;\\
    R(r,n) = & V_r(n) + H_r(n).
  \end{align*}
\end{theorem}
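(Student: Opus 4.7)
The plan is to establish the recurrences by scanning the features of a strip covering from left to right. I would define $V_r(n)$ (respectively $H_r(n)$) as the number of non-isomorphic height-$r$ strip coverings with exactly $n$ features whose rightmost infinite tail consists of vertical dominoes (respectively a staggered pattern of horizontal dominoes). Every strip covering has a well-defined right tail type, so the two classes partition the total count, giving $R(r,n) = V_r(n) + H_r(n)$.

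I would verify the base cases directly. For $n = 0$ the only strip coverings are the two trivial tilings, one all-vertical and one all-horizontal, so $V_r(0) = H_r(0) = 1$. For $n = 1$, I would run through the four feature types (vortex, bidimer, vee, loner) depicted in Figure~\ref{fig:strip}, restrict to those leaving the right tail vertical, and count their vertical positions inside a strip of height $r$; using the structural classification from \cite{EricksonRuskeySchurch2011,Erickson2013a}, the total works out to $4r-2$.

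For the inductive step, I would argue that every non-isomorphic covering with $n$ features is uniquely obtained by appending a single rightmost feature to a covering with $n-1$ features, and that the number of admissible append operations depends only on the tail type of the shorter covering. Specifically, appending to a vertical-tail covering produces a new vertical tail in $4(r-1)$ ways and a new horizontal tail in $2$ ways, whereas appending to a horizontal-tail covering produces a vertical tail in $2$ ways and never produces a horizontal tail (this asymmetry is forced by the tatami rule, which prevents two successive horizontal tails from meeting without an intervening vertical region). The claimed recurrences follow immediately.

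The main obstacle will be justifying the constants $4(r-1)$ and $2$ in each transition by a case analysis of which features can legally follow which tail type, and how much vertical freedom each such feature enjoys inside a height-$r$ strip. This analysis leans on the fact, from \cite{EricksonRuskeySchurch2011,Erickson2013a}, that the horizontal spacing between two consecutive features is determined by the features themselves, so only the vertical position and a fixed number of orientations remain as degrees of freedom. Once this dictionary of admissible transitions is in hand, the recurrences are a direct transcription.
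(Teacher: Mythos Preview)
Your proposal is correct and essentially identical to the paper's proof: the paper likewise partitions strip coverings by the bond type (vertical or horizontal) of an extremal region---leftmost rather than your rightmost, a harmless mirror---and obtains the constants $4(r-1)$ and $2$ from exactly the feature-by-feature case analysis (bidimers, vortices, vees, loners) that you outline. One minor wording point: in the paper's setup the horizontal spacing between consecutive features is not ``determined by the features'' but simply irrelevant to the isomorphism class, since non-isomorphic coverings are indexed by the ordered list of feature types alone; this does not affect your argument.
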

\begin{proof}
  Recall that a \T-diagram partitions the strip into regions, covered
  by vertical or horizontal \emph{bond}; that is, a rotation of the
  basic brick-laying pattern. Let $V_r(n)$ and $H_r(n)$ be the number
  of non-isomorphic strip coverings whose leftmost regions are
  vertical and horizontal bond, respectively.  The number of
  non-isomorphic features on the height $r$ strip are as follows:
  \begin{description}
  \item[Bidimers] There are $r-1$ vertical, and $r-1$ possible horizontal bidimers.
  \item[Vortices] There are $r-2$ clockwise, and $r-2$ possible
    counterclockwise vortices.
  \item[Vees] There is 1 vee on the top boundary and 1 vee on the bottom.
  \item[Loners] There are four loners, $\nearrow$, $\nwarrow$, $\searrow$, and $\swarrow$.  The first
    two occur on the bottom boundary, and the latter on top boundary.
  \end{description}
  All of the bidimers, vortices and vees have vertical bond to their
  left and right.  The $\searrow$ and $\nearrow$ loners have horizontal and vertical
  bond to their left and right, respectively, while the $\swarrow$ and $\nwarrow$
  loners have vertical and horizontal bond to their left and right,
  respectively.

  The bond coverings of the strip are either a horizontal or vertical
  bond.  These are counted by the initial conditions
  $H_r(0)=V_r(0)=1$.  If the leftmost region of the covering is
  horizontal bond, then the leftmost feature must be a $\searrow$ or
  $\nearrow$ loner.  The region to the left of the remaining features
  is a vertical bond, so $H_r(n) = 2V_r(n-1)$.

  The total number of features with vertical bond on their left side
  is $(r-1)+(r-1)+(r-2)+(r-2)+1+1+1+1$, so this gives $V_r(1)=4r-2$.
  Exactly two of these features, namely $\nwarrow$ and $\swarrow$
  loners, have horizontal bond on their right, so $V_r(n) =
  4(r-1)V_r(n-1) + 2H_r(n-1)$.

  Thus $R(r,n)=V_r(n)+H_r(n)$, as required.
\end{proof}

\begin{corollary}
  \label{cor:strip-gen}
  There exists a CAT algorithm for generating non-isomorphic, height $r$ strip
  coverings.
\end{corollary}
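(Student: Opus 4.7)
The plan is to turn the recurrences of \tref{strip} into two mutually recursive backtracking procedures that build a strip covering's feature list from left to right, and then apply the standard amortisation argument for combinatorial generation. Call the procedures $\textsc{GenV}(n)$ and $\textsc{GenH}(n)$: each extends the current partial feature list by $n$ additional features, under the assumption that the bond region immediately to the right of the list so far is vertical or horizontal, respectively. At the top level, we run $\textsc{GenV}(n)$ followed by $\textsc{GenH}(n)$, so that together they enumerate all $V_r(n)+H_r(n)=R(r,n)$ non-isomorphic coverings.

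The structure of $\textsc{GenV}(n)$ mirrors the identity $V_r(n) = 4(r-1)V_r(n-1) + 2H_r(n-1)$: for each of the $4(r-1)$ features whose left and right bonds are both vertical (bidimers of both orientations, vortices of both chiralities, and both vees), push it on the feature list, call $\textsc{GenV}(n-1)$, then pop; and for each of the two loners $\nwarrow$ and $\swarrow$ (vertical bond left, horizontal bond right), push, call $\textsc{GenH}(n-1)$, then pop. Analogously, $\textsc{GenH}(n)$ realises $H_r(n) = 2V_r(n-1)$ by iterating over the $\nearrow$ and $\searrow$ loners. The base cases $\textsc{GenV}(0)$ and $\textsc{GenH}(0)$ emit the current list as an output, representing the unique strip covering whose bond region to the right is fully vertical or fully horizontal, respectively. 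Storing the partial list as a doubly linked list makes every push, pop, and emission take $O(1)$ time if we output only the difference from the previous covering.

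Correctness (no omissions, no repetitions) is immediate from the proof of \tref{strip}: that proof already performs the very case split on the bond-type to the left of each feature that drives the recursion, and it shows the split is a bijection at every level. Hence the leaves of our recursion tree are in bijection with the elements of $R(r,n)$.

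For the CAT bound, observe that for $r\geq 2$ every internal node of the recursion tree has at least two children: a $\textsc{GenV}$-node has $4(r-1)+2\geq 6$ children, and a $\textsc{GenH}$-node has $2$. The tree therefore contains no unary chains, so the number of internal nodes is at most a constant times the number of leaves. Each internal node performs $O(r)$ work in its loop over feature choices, giving $O(r)$ amortised time per covering, which is constant once $r$ is fixed. The main potential obstacle is the $\textsc{GenH}$ branch, whose fan-out of $2$ is small and could in principle nullify amortisation if such nodes could chain; but since $\textsc{GenH}(n)$ only calls $\textsc{GenV}(n-1)$ (never another $\textsc{GenH}$), no two $\textsc{GenH}$-nodes occur in a row along any root-to-leaf path, so every two consecutive levels of the tree fan out by at least $2\cdot 6=12$, and the CAT analysis goes through cleanly.
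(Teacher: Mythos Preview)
Your proposal is correct and follows essentially the same approach as the paper: both translate the recurrences of \tref{strip} directly into a recursive generation procedure and invoke the standard argument that a recursion tree whose internal nodes all have branching factor at least~$2$ yields constant amortised time per leaf. Your write-up is more explicit about the $\textsc{GenV}/\textsc{GenH}$ split and the data structures, and your closing remark about $\textsc{GenH}$-nodes not chaining is unnecessary (fan-out $\ge 2$ at every internal node already suffices), but none of this differs materially from the paper's argument.
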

\begin{proof}
  There are $4r$ possible non-isomorphic features in height $r$ strip
  coverings, each of which can be expressed uniquely as an element of
  $\{0,1,\ldots , 4r-1\}$.  The recurrence relations in the proof of
  Theorem~\ref{thm:strip} describe a tree whose internal nodes are at
  least of degree $2$, and whose leaves all represent output.  The
  recursive algorithm which naturally arises from
  Theorem~\ref{thm:strip}, iterates through the features that can be
  added, given the bond of the leftmost region.  After adding each
  feature to the covering, using its unique symbol, the algorithm
  recurses.  There is a constant number of operations per call and a
  constant number of calls per leaf.  Therefore the algorithm is CAT,
  since there are more leaves than internal nodes.
\end{proof}

Generating strip coverings with \T-diagrams is a step towards doing
the same for tatami coverings of rectangles.  These are a natural
extension of domino fixed-height coverings, proposed by Knuth in
\cite{Knuth2011}, and discussed in~\cite{RuskeyWoodcock2009}.  Perhaps
one might proceed by considering the strip coverings whose \T-diagrams
are bounded on the left and right, and strip coverings with minimal
distance between features.  The desired respective positions for
adjacent pairs of features can be tabulated in the latter case in a
$4r\times 4r$ matrix.

Enumerating isomorphic strip coverings which fit in a bounded portion
of the strip, perhaps is equivalent to counting a type of integer
partition.  That is, the total amount of space between features is a
constant, while the placement of a feature can be shifted horizontally
by an even number of grid squares, if it is unhindered by a
neighbouring feature or a vertical boundary.



\bibliographystyle{plain}
\bibliography{bibliography}

\end{document}